\newtheorem{theorem}{Theorem}
\newtheorem{proposition}[theorem]{Proposition}
\newtheorem{lemma}[theorem]{Lemma}
\newtheorem{corollary}[theorem]{Corollary}
\theoremstyle{definition}
\theoremstyle{remark}
\newcommand{\ZZ}{\mathbb{Z}}
\begin{document}

\title{
Supplementary difference sets related to a certain class of
complex spherical 2-codes 
}

\date{\today}

\author{
Makoto Araya\thanks{Department of Computer Science,
Shizuoka University,
Hamamatsu 432--8011, Japan.
email: araya@inf.shizuoka.ac.jp},
Masaaki Harada\thanks{
Research Center for Pure and Applied Mathematics,
Graduate School of Information Sciences,
Tohoku University, Sendai 980--8579, Japan.
email: mharada@m.tohoku.ac.jp.}
and
Sho Suda\thanks{Department of Mathematics Education, Aichi University of Education, Kariya 448--8542, Japan.
email: suda@auecc.aichi-edu.ac.jp}
}

\maketitle


\begin{center}
{\bf Dedicated to Professor Hiroshi Kimura on His 80th Birthday} 
\end{center}

\begin{abstract}
In this paper, we study skew-symmetric $2$-$\{v;r,k;\lambda\}$
supplementary difference sets 
related to a certain class of complex spherical 2-codes.
A classification of such supplementary difference sets
is complete for $v \le 51$.
\end{abstract}

\section{Introduction}\label{sec:Intro}

Let $\Omega(d)$ denote the complex unit
sphere in $\mathbb{C}^d$. 
For a finite set $X$ in $\Omega(d)$, define
\[
A(X)=\{x^*y \mid  x,y \in X, x \ne y \}, 
\]
where $x^*$ is the transpose conjugate of a column vector $x$. 
A finite set $X$ is called a {\em complex spherical $2$-code} 
if $|A(X)|=2$ and $A(X)$ contains an imaginary number. 
%
%
A complex spherical $2$-code $X$ with $A(X)=\{\alpha,\bar{\alpha}\}$ has
the structure of a tournament $(X,E)$, where $E=\{(x,y)\in X\times X
\mid x^*y=\alpha \}$~\cite{NS}. 
We say that the tournament $(X,E)$
is attached to the complex spherical $2$-code $X$. 
\begin{theorem}[Nozaki and Suda~{\cite[Theorem 4.8]{NS}}]\label{thm:i1}
\label{thm:bound2code}
Let $X$ be a complex spherical $2$-code in $\Omega(d)$. 
Let $A$ be the adjacency matrix of the tournament $G$ attached to $X$.
\begin{enumerate}
\item\label{enu:1} $|X|\leq 2d+1$ if $d$ is odd, and $|X|\leq 2d$ if $d$ is even. 
\item 
$|X| = 2d+1$ for odd $d$ if and only if $G$ is a doubly regular tournament. 
\item 
$|X|= 2d$  for even $d$ if and only if $I+A-A^T$ is a skew-Hadamard matrix,
where $I$ is the identity matrix
and $A^T$ denotes the transposed matrix of $A$.
\item $|X|=2d$  for odd $d$  if and only if one of the following occurs:
\begin{enumerate}[\rm (a)]
\item 
$A$ is obtained as the adjacency matrix of the induced subgraph 
of some doubly regular tournament by deleting a certain vertex.  
\item There exists a permutation matrix $P$ such that 
\begin{align*}
P(I+A-A^T)(I+A-A^T)^T P^T=
\begin{pmatrix}
\alpha I+\beta J& O\\
O&\alpha I+\beta J
\end{pmatrix},
\end{align*} 
for some integers $\alpha,\beta$ with $\alpha \ge 2, \beta \ge 1$,
where 
$J$ denotes the all-one matrix and 
$O$ denotes the zero matrix of appropriate size.
\end{enumerate}
\end{enumerate}
\end{theorem}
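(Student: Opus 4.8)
The whole theorem is read off from the Gram matrix of $X$. Write $X=\{x_1,\dots,x_n\}$ with $n=|X|$, and let $M$ be the Hermitian matrix with $M_{ij}=x_i^*x_j$; since the $x_i$ are unit vectors in $\mathbb{C}^d$, $M\succeq0$ and $\rank M\le d$. If $A(X)=\{\alpha,\bar\alpha\}$ with $\alpha=a+bi$, $b\ne0$, and $A$ is the adjacency matrix of $G$ (so $A+A^T=J-I$), then $M=(1-a)I+aJ+b\,(iS)$, where $S=A-A^T$ is real skew-symmetric and $iS$ is Hermitian. From $|\alpha|\le1$ we get $|a|<1$, and after replacing $\alpha$ by $\bar\alpha$ if necessary we may assume $b>0$. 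The two quantities $M+\overline M=2\bigl((1-a)I+aJ\bigr)$ and $M-\overline M=2b\,(iS)$ --- the ``real'' and ``imaginary'' parts of $M$ --- are the levers: the bound comes from the first, the combinatorics from the second.

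\emph{Items 1 and 2.} Since $\rank(M+\overline M)\le\rank M+\rank\overline M\le2d$, while $(1-a)I+aJ$ has rank $n$ unless $a=-1/(n-1)$, we obtain $n\le2d+1$, and $n\le2d$ unless $a=-1/(n-1)$. Suppose $n=2d+1$; then $a=-1/(n-1)$, $\rank M=d$, and $\ker M\cap\ker\overline M=\langle\mathbf 1\rangle$, so $M\mathbf 1=0$, which forces $S\mathbf 1=0$: $G$ is regular of out-degree $d$. Hence $S$ preserves $\mathbf 1^{\perp}$, where $M=(1-a)I+b\,(iS)$; positive semidefiniteness together with $\rank(M|_{\mathbf 1^{\perp}})=d$ forces $(iS)|_{\mathbf 1^{\perp}}$ to have only the eigenvalues $\pm\sigma$ for a single $\sigma>0$, each of multiplicity $d$, and then $\operatorname{tr}(SS^T)=n(n-1)$ gives $\sigma^2=n$, i.e. $SS^T=nI-J$. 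Expanding $SS^T=(2A-J+I)(2A^T-J+I)$ with $A\mathbf 1=d\mathbf 1$ turns this into $AA^T=\tfrac{n+1}{4}I+\tfrac{n-3}{4}J$, the defining identity of a doubly regular tournament; in particular $n\equiv3\pmod4$, so $d$ is odd, which yields the case $d$ even of item 1. Conversely, given a doubly regular tournament on $2d+1$ vertices, set $a=-1/(2d)$, $b=\sqrt n/(2d)$ and define $M$ by the formula above; the same eigenvalue computation shows $M\succeq0$ with $\rank M=d$, so $M$ is the Gram matrix of a $(2d+1)$-element complex spherical $2$-code in $\mathbb{C}^d$ with attached tournament $G$. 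This proves item 2.

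\emph{Items 3 and 4.} Now $G$ need not be regular, so $iS$ and $J$ need not commute, and one argues with the spectrum of $iS$ directly. Assume $n=2d$ and put $C=I+A-A^T$; one first checks $\rank M=d$ and $\ker M\cap\ker\overline M=0$ (regularity would require $n$ odd), so $\ker M$ and $\overline{\ker M}$ are complementary $d$-dimensional subspaces. On $\ker M\cap\mathbf 1^{\perp}$ one has $(iS)v=-\sigma v$ with $\sigma=(1-a)/b>0$, and on the complex-conjugate space $(iS)v=\sigma v$; as these two spaces span a subspace of codimension at most $2$, the real symmetric matrix $SS^T$ equals $\sigma^2I$ off a subspace of dimension $\le2$. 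Since $n$ is even, every eigenvalue of $SS^T=-S^2$ has even multiplicity, so $\sigma^2$ occurs with multiplicity $n$ or $n-2$. If it is $n$, then $\operatorname{tr}(SS^T)=n(n-1)$ forces $SS^T=(n-1)I$, i.e. $CC^T=nI$, so $C$ is skew-Hadamard (of order $2d$, which requires $d$ even) --- item 3; the converse is the same eigenvalue computation with $a=0$, $b=1/\sqrt{n-1}$. If the multiplicity is $n-2$, one shows $a\ne0$ (otherwise $M$ commutes with $iS$ and $\ker M$ is too small); letting $\rho$ be the remaining eigenvalue of $SS^T$ and $P$ the associated rank-$2$ orthogonal projection, one proves $S^2\mathbf 1=-\rho\mathbf 1$, so $P$ has range $\langle\mathbf 1,\,S\mathbf 1\rangle$ and $S\mathbf 1$ is constant on two halves $V_1,V_2$ of the vertex set. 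An arithmetic argument --- the eigenvalues of the integer matrix $S$ are algebraic integers, together with $(n-2)\sigma^2+2\rho=n(n-1)$ --- pins down $\sigma^2$: either $\sigma^2=n+1$, which is exactly the case where $G$ is a doubly regular tournament on $2d+1$ vertices with one vertex removed (item 4(a), forcing $d$ odd), or $\sigma^2\le n-2$, in which case $CC^T$, written in the ordering given by $V_1,V_2$, equals $\operatorname{diag}(\alpha I+\beta J,\ \alpha I+\beta J)$ with $\alpha=\sigma^2+1\ge2$ and $\beta=n-1-\sigma^2\ge1$, the skew-symmetric supplementary-difference-set configuration of item 4(b) (whose group has odd order $d$); the intermediate value $\sigma^2=n$ must be excluded separately. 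The converses reverse these constructions: for 4(a) delete the corresponding vector from the code of item 2, and for 4(b) recover $S$ from the block form and choose $a,b$ so that the resulting $M$ is positive semidefinite of rank $d$.

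\emph{Main obstacle.} The delicate part is the multiplicity-$(n-2)$ case of items 3 and 4: locating the spectrum of $iS$ when it does not commute with $J$, establishing $S^2\mathbf 1=-\rho\mathbf 1$, and separating the ``deleted vertex'' configuration from the block (supplementary difference set) configuration --- in particular ruling out the spurious value $\sigma^2=n$ and verifying the parity of $d$ in each branch. This case analysis, which is precisely what ties these extremal codes to the skew-symmetric $2$-$\{v;r,k;\lambda\}$ supplementary difference sets studied in the rest of the paper, is the technical heart of the argument.
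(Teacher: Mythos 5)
First, note that this paper does not prove the statement at all: it is quoted verbatim from Nozaki and Suda \cite[Theorem 4.8]{NS}, so there is no in-paper proof to compare against, and your proposal has to be judged as a free-standing argument. Your overall strategy is reasonable and close to what a complete proof must do: write the Gram matrix as $M=(1-a)I+aJ+b(iS)$ with $S=A-A^T$, get the bound from $\rank M\le d$ together with $\rank(M+\overline M)\ge n-1$, and read the extremal structure off the spectrum of $iS$. Items 1 and 2, and the multiplicity-$n$ branch of item 3, are essentially correct as sketched.

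The genuine gaps are concentrated in the multiplicity-$(n-2)$ branch, and several of them require an idea rather than bookkeeping. (i) ``$S\mathbf{1}$ is constant on two halves'' does not follow from ``$P$ has range $\langle\mathbf 1,S\mathbf 1\rangle$''; one needs, e.g., $SS^T=\sigma^2I+(\rho-\sigma^2)P$ together with integrality of the diagonal of $SS^T$ to force all entries of $S\mathbf 1$ to have equal absolute value, and only then do the off-diagonal blocks of $SS^T$ vanish. (ii) You never bound $\sigma^2$: the trace identity gives $(n-2)\sigma^2\le n(n-1)$, hence $\sigma^2\le n+1$, and $\sigma^2=n$ is excluded by parity (off-diagonal entries of $SS^T$ are sums of $n-2$ terms $\pm1$, hence even for $n$ even); neither step appears, though you flag the second. (iii) Identifying $\sigma^2=n+1$ with item 4(a) requires actually adjoining a vertex and verifying double regularity of the extension, which you only assert. (iv) Most seriously, the ``only if'' of item 3 needs the block configuration to be impossible for even $d$ (equivalently, $d$ odd in case 4(b)); your sole remark on this, ``whose group has odd order $d$,'' is unsupported --- there is no group in sight, and nothing you wrote yields the parity of $d$ (indeed, your converse construction of a code from a block-structured tournament would work for even $d$ too, so the parity claim is exactly what must be proved). (v) The converses of items 3 and 4(b) hide a real computation: since $J$ and $iS$ do not commute, one must use the $S$- and $J$-invariant subspace $\langle\mathbf 1,S\mathbf 1\rangle$ and its orthocomplement to choose $a,b$ making $M$ positive semidefinite of rank $d$; ``choose $a,b$'' is not enough. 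So the proposal is a plausible outline of a correct route, but the case analysis it defers is precisely where the content of items 3 and 4 lies, and as written it does not yet constitute a proof.
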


Doubly regular tournaments have been widely studied 
(see e.g., \cite{KS94, NSt, RB72, R86, S69}).
Skew-Hadamard matrices are a class of Hadamard matrices, which
has been widely studied (see e.g., 
\cite{BS, FKS, GS, NSt, RB72, S69, W71}).
These motivate our investigation of matrices $M$ 
satisfying the following conditions:
\begin{align}
\label{eq:C1}
&\text{$M$ is a $2d \times 2d$ $(1,-1)$-matrix with $d$ odd},\\
\label{eq:C2}
&\text{$M-I=-(M-I)^T$, that is, $M$ is skew-symmetric},\\
\label{eq:C3}
&M M^T=
\left(
\begin{smallmatrix}
\alpha I+\beta J& O\\
O&\alpha I+\beta J
\end{smallmatrix}
\right)
\text{for some integers $\alpha,\beta$
with $\alpha \ge 2,\beta \ge 1$.}
\end{align}
In this paper, with this motivation, we study
skew-symmetry for $2$-$\{v;r,k;\lambda\}$
supplementary difference sets satisfying the following conditions:
\begin{align}
\label{eq:CS1}
&\text{$v$ is an odd positive integer},\\
\label{eq:CS2}
&\text{$4(r+k-\lambda)\ge 2$}, \\
\label{eq:CS3}
&\text{$2(v-2(r+k-\lambda))\ge 1$}.
\end{align}
These supplementary difference sets
give matrices $M$ satisfying \eqref{eq:C1}--\eqref{eq:C3} (Proposition~\ref{prop:SDS}).

This paper is organized as follows.
In Section~\ref{sec:Pre}, we give definitions and we recall notions on
supplementary difference sets and $D$-optimal designs.
Some basic facts on these subjects 
are also provided.
In Section~\ref{sec:skewSDS}, we give some observations
on skew-symmetric supplementary difference sets.
In Section~\ref{sec:method}, we describe how to classify
skew-symmetric supplementary difference sets 
satisfying \eqref{eq:CS1}--\eqref{eq:CS3}.
In Section~\ref{sec:SDS},
we give a classification of skew-symmetric  $2$-$\{v;r,k;\lambda\}$
supplementary differences
sets satisfying \eqref{eq:CS1}--\eqref{eq:CS3}
for $v \le 51$ (Theorem~\ref{thm:SDS-Classification}).
This is the main result of this paper.
Skew-symmetric circulant $D$-optimal designs 
satisfying \eqref{eq:Gram} are corresponding to a special class of 
supplementary difference sets.
In Section~\ref{sec:ccsdd}, 
as a consequence of Theorem~\ref{thm:SDS-Classification},
we give a 
classification of skew-symmetric circulant $D$-optimal
designs meeting \eqref{eq:det} for orders up to $110$. 

\section{Preliminaries}\label{sec:Pre}

In this section, we give definitions and we recall notions on
supplementary difference sets and $D$-optimal designs.
Some basic facts on these subjects
are also provided.

\subsection{Supplementary difference sets}\label{sec:SDS-D}

Let $\ZZ_{v} = \{0,1,\ldots,v-1\}$ be the ring of integers modulo $v$, where $v >2$.
%
For $A \subset \ZZ_{v}$ and $i \in \ZZ_{v}$, define
\begin{multline*}
P_{A}(i) = |\{(x,y) \in A \times A \mid y-x = i\}| \text{ and } \\
P_{A} = (P_{A}(1),P_{A}(2),\ldots,P_{A}(v-1)).
\end{multline*}
Let $A$ and $B$ be an $r$-subset and a
$k$-subset of $\ZZ_v$, respectively.
If a pair $(A,B)$ satisfies 
$$P_{A}+P_{B}=(\lambda,\lambda,\ldots,\lambda),$$
then it is called a {\em $2$-$\{v;r,k;\lambda\}$ supplementary difference
set}.
We refer to~\cite{CK,KKS,Seberry72,Seberry74}
for basic facts on supplementary difference sets.

\begin{lemma}[{Wallis~\cite[Lemma~1]{Seberry72}}]
If there exists a $2$-$\{v;r,k;\lambda\}$ supplementary difference set,
then 
\begin{align}\label{eq:SDS}
r(r-1)+k(k-1)=\lambda(v-1).
\end{align}
\end{lemma}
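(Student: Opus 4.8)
The plan is to count, in two ways, the ordered pairs of distinct elements drawn from $A$ together with those drawn from $B$, organized according to their difference in $\ZZ_v$. First I would observe that for a fixed nonzero $i \in \ZZ_v$ the number $P_A(i)$ counts the ordered pairs $(x,y) \in A \times A$ with $y-x=i$; since $i \neq 0$ forces $x \neq y$, summing $P_A(i)$ over all $i \in \{1,2,\ldots,v-1\}$ counts every ordered pair of distinct elements of $A$ exactly once. Hence $\sum_{i=1}^{v-1} P_A(i) = |A|(|A|-1) = r(r-1)$, and the same argument applied to $B$ gives $\sum_{i=1}^{v-1} P_B(i) = k(k-1)$.

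Next I would invoke the defining property of the supplementary difference set: the equality $P_A + P_B = (\lambda,\lambda,\ldots,\lambda)$ means precisely that $P_A(i) + P_B(i) = \lambda$ for every $i \in \{1,2,\ldots,v-1\}$. Summing this identity over the $v-1$ nonzero residues yields $\sum_{i=1}^{v-1}\bigl(P_A(i)+P_B(i)\bigr) = \lambda(v-1)$. Combining this with the two counts from the previous paragraph gives $r(r-1)+k(k-1)=\lambda(v-1)$, which is \eqref{eq:SDS}.

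There is essentially no serious obstacle here; the argument is a routine double count. The only point that calls for a moment's care is the bookkeeping at the difference $i=0$: one has $P_A(0)=|A|=r$, coming from the $r$ pairs $(x,x)$ with $x \in A$, and it is exactly the exclusion of these diagonal pairs that makes the sum over the nonzero differences equal $r(r-1)$ rather than $r^2$ (and likewise $k(k-1)$ for $B$). Once this is noted, the identity follows immediately.
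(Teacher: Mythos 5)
Your double-counting argument is correct: summing $P_A(i)+P_B(i)=\lambda$ over the $v-1$ nonzero residues and noting that $\sum_{i=1}^{v-1}P_A(i)=r(r-1)$ and $\sum_{i=1}^{v-1}P_B(i)=k(k-1)$ gives exactly \eqref{eq:SDS}. The paper itself gives no proof, citing Wallis's Lemma~1, and your argument is precisely the standard one from that source, so nothing further is needed.
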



Chadjipantelis and Kounias~\cite[Appendix]{CK} gave a
correspondence between 
$2$-$\{v;r,k;\lambda\}$ supplementary difference sets
and pairs of circulant matrices.
Let $A$ and $B$ be an $r$-subset and a
$k$-subset of $\ZZ_v$, respectively.
Let $R_1$ and $R_2$ be the circulant $v \times v$ $(1,-1)$-matrices with
first rows $r_1=(r_{1,1},r_{1,2},\ldots,r_{1,v})$ and 
$r_2=(r_{2,1},r_{2,2},\ldots,r_{2,v})$, respectively.
The correspondence was defined as follows:
$r_{1,i+1}=-1$ if $i \in A$,
$r_{1,i+1}=1$ if $i \not\in A$
and 
$r_{2,i+1}=-1$ if $i \in B$,
$r_{2,i+1}=1$ if $i \not\in B$.

\begin{lemma}[{Chadjipantelis and Kounias~\cite[Appendix]{CK}}]\label{lem:SDS}
A pair $(A,B)$ is a $2$-$\{v;r,k;\lambda\}$ supplementary difference set
if and only if 
$R_1R_1^T+R_2R_2^T=4(r+k-\lambda)I+2(v-2(r+k-\lambda))J$.
\end{lemma}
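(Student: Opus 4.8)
The plan is to exploit the circulant structure. Since $R_1$ and $R_2$ are circulant, so are $R_1R_1^T$ and $R_2R_2^T$ (each is a symmetric circulant), and hence so is their sum. Therefore the matrix identity $R_1R_1^T+R_2R_2^T=4(r+k-\lambda)I+2(v-2(r+k-\lambda))J$ is equivalent to the equality of these two circulant matrices entry by entry along a single row, i.e.\ to a family of $v$ scalar equations indexed by the ``shift'' $\ell\in\ZZ_v$. So the whole proof reduces to computing, for each $\ell$, the shift-$\ell$ entry of $R_1R_1^T+R_2R_2^T$ and comparing it with that of the right-hand side.

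For the computation of $R_1R_1^T$, write the first row of $R_1$ as $((r_1)_0,\ldots,(r_1)_{v-1})$ with $(r_1)_t=1-2\cdot\mathbf{1}[t\in A]$. The shift-$\ell$ entry of $R_1R_1^T$ is $\sum_{t\in\ZZ_v}(r_1)_t\,(r_1)_{t-\ell}$. Expanding $(1-2\cdot\mathbf{1}[t\in A])(1-2\cdot\mathbf{1}[t-\ell\in A])$ and summing over $t$: the constant term contributes $v$; each of the two linear terms contributes $-2r$ (because $|A|=r$ and $t\mapsto t-\ell$ is a bijection of $\ZZ_v$); and the quadratic term contributes $4\cdot|\{t:t\in A,\ t-\ell\in A\}|$, which for $\ell\neq 0$ equals $4P_A(\ell)$ via the substitution $x=t-\ell$, $y=t$. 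Thus the shift-$\ell$ entry of $R_1R_1^T$ is $v$ for $\ell=0$ and $v-4r+4P_A(\ell)$ for $\ell\neq 0$; symmetrically, $R_2R_2^T$ has shift-$\ell$ entry $v$ and $v-4k+4P_B(\ell)$. Adding, the shift-$\ell$ entry of $R_1R_1^T+R_2R_2^T$ equals $2v$ for $\ell=0$ and $2v-4(r+k)+4\bigl(P_A(\ell)+P_B(\ell)\bigr)$ for $\ell\neq 0$.

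Finally I would compare with the matrix $4(r+k-\lambda)I+2(v-2(r+k-\lambda))J$, whose diagonal entry is $4(r+k-\lambda)+2(v-2(r+k-\lambda))=2v$ and whose off-diagonal entries all equal $2v-4(r+k-\lambda)$. The diagonal equations hold identically, so the matrix identity is equivalent to $2v-4(r+k)+4\bigl(P_A(\ell)+P_B(\ell)\bigr)=2v-4(r+k-\lambda)$ for all $\ell\neq 0$, i.e.\ to $P_A(\ell)+P_B(\ell)=\lambda$ for every $\ell\in\{1,\ldots,v-1\}$. Since $A$ is an $r$-subset and $B$ a $k$-subset of $\ZZ_v$ by hypothesis, this last condition is precisely $P_A+P_B=(\lambda,\ldots,\lambda)$, the defining property of a $2$-$\{v;r,k;\lambda\}$ supplementary difference set, which gives the desired equivalence.

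The only genuine obstacle is the bookkeeping in the middle step: one must fix the indexing convention for circulant matrices (whether $(R_1)_{i,j}=(r_1)_{j-i}$ or $(r_1)_{i-j}$, indices modulo $v$), keep track of the wrap-around, and match the orientation of the difference $y-x=i$ used in the definition of $P_A(i)$. The symmetries $P_A(i)=P_A(v-i)$ and $(R_1R_1^T)^T=R_1R_1^T$ make the alternative conventions agree, but this is the point that should be verified carefully rather than assumed.
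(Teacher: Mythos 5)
Your proof is correct: the reduction to a single row via the circulant structure, the expansion of $\sum_t\bigl(1-2\cdot\mathbf{1}[t\in A]\bigr)\bigl(1-2\cdot\mathbf{1}[t-\ell\in A]\bigr)$ giving $v-4r+4P_A(\ell)$ for $\ell\neq 0$, and the comparison with the diagonal value $2v$ and off-diagonal value $2v-4(r+k-\lambda)$ all check out, and the indexing/orientation issue you flag is indeed harmless because of $P_A(i)=P_A(v-i)$ and the symmetry of $R_1R_1^T$. The paper does not prove this lemma itself but cites it from Chadjipantelis and Kounias, and your computation is exactly the standard autocorrelation argument behind that citation, so the two approaches coincide.
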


\subsection{$D$-optimal designs and supplementary difference sets}
\label{sec:dd}

A {\em $D$-optimal design} of order $n$ is an $n \times n$ $(1,-1)$-matrix
having maximum determinant.
Ehlich~\cite{Ehlich} showed that for $n \equiv 2 \pmod 4$ and
$n> 2$, any $n \times n$ $(1,-1)$-matrix $M$ satisfies
\begin{equation}
\label{eq:det}
\det M \le (2n-2)(n-2)^{(n-2)/2},
\end{equation}
and
that equality is possible only if $2n-2$ is a sum of two
perfect squares.
Moreover, if $n=2v \equiv 2 \pmod 4$, and
both $R_1$ and $R_2$ are $v \times v$ commutative $(1,-1)$-matrices such that
\begin{equation}\label{eq:Gram}
R_1 R_1^T + R_2 R_2^T = (2v-2)I + 2 J,
\end{equation}
then
\begin{equation}\label{eq:D-opt}
X(R_1,R_2)=
\left(\begin{array}{rr}
R_1 & R_2 \\
-R_2^T & R_1^T
\end{array}\right)
\end{equation}
is a $D$-optimal design meeting the above bound \eqref{eq:det}~\cite{Ehlich}.
%

If the matrices $R_1$ and $R_2$ in \eqref{eq:D-opt} are circulant,
then $X(R_1,R_2)$ is called a {\em circulant $D$-optimal design} 
meeting \eqref{eq:det}~\cite{KKNK}.
Most of the known $D$-optimal designs meeting \eqref{eq:det} are circulant
(see e.g., \cite{AH, CK, FKS, KKS, KKNK}).
If $X(R_1,R_2)$ is a circulant $D$-optimal design meeting \eqref{eq:det},
then it was shown in~\cite{CK} that
\begin{align}\label{eq:1}
(v-2r)^2+(v-2k)^2=2n-2,
\end{align}
where
$r$ and $k$ are the numbers of $-1$'s in the first rows of
$R_1$ and $R_2$, respectively.

By Lemma~\ref{lem:SDS}, we have the following:

\begin{lemma}[{Chadjipantelis and Kounias~\cite[Appendix]{CK}}]
\label{lem:SDS-Doptimal}
Let $A$ and $B$ be an $r$-subset and a
$k$-subset of $\ZZ_v$, respectively.
Let $R_1$ and $R_2$ be the corresponding circulant 
$v \times v$ $(1,-1)$-matrices described in Section~\ref{sec:SDS-D}.
A pair $(A,B)$ is a $2$-$\{v;r,k;r+k-(v-1)/2\}$ supplementary difference set
if and only if 
$X(R_1,R_2)$ in \eqref{eq:D-opt} is a circulant $D$-optimal design 
of order $2v$ meeting \eqref{eq:det}, where
$r$ and $k$ are the numbers of $-1$'s in the first rows of
$R_1$ and $R_2$, respectively.
\end{lemma}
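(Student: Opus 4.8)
The plan is to reduce everything to the single algebraic identity of Lemma~\ref{lem:SDS} and then to match the two sides with the defining relation~\eqref{eq:Gram} for a circulant $D$-optimal design. By Lemma~\ref{lem:SDS}, the pair $(A,B)$ is a $2$-$\{v;r,k;\lambda\}$ supplementary difference set if and only if
\[
R_1R_1^T+R_2R_2^T=4(r+k-\lambda)I+2(v-2(r+k-\lambda))J .
\]
So first I would substitute the specific value $\lambda=r+k-(v-1)/2$ into this identity. This makes $r+k-\lambda=(v-1)/2$, so $4(r+k-\lambda)=2v-2$ and $2(v-2(r+k-\lambda))=2(v-(v-1))=2$. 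Hence the right-hand side collapses to $(2v-2)I+2J$, which is exactly the right-hand side of~\eqref{eq:Gram}. Thus the supplementary-difference-set condition with this $\lambda$ is equivalent to $R_1R_1^T+R_2R_2^T=(2v-2)I+2J$.

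Next I would invoke the facts recalled in Section~\ref{sec:dd}: since $R_1,R_2$ are circulant they commute, and circulant matrices of the same order commute with each other, so the hypothesis of the Ehlich construction that $R_1,R_2$ be commuting $(1,-1)$-matrices satisfying~\eqref{eq:Gram} is met precisely when the displayed Gram identity holds. By Ehlich's result quoted in the excerpt, $X(R_1,R_2)$ as in~\eqref{eq:D-opt} is then a $D$-optimal design of order $2v$ meeting the bound~\eqref{eq:det}, and being built from circulant blocks it is by definition a circulant $D$-optimal design meeting~\eqref{eq:det}. Conversely, if $X(R_1,R_2)$ is a circulant $D$-optimal design of order $2v$ meeting~\eqref{eq:det}, one needs that the Gram relation~\eqref{eq:Gram} holds for its circulant blocks; I would obtain this by noting that equality in Ehlich's bound forces $X(R_1,R_2)X(R_1,R_2)^T$ to have the block-diagonal form $\mathrm{diag}(\alpha I+\beta J,\alpha I+\beta J)$ appearing in Theorem~\ref{thm:bound2code}(4b) with $\alpha=2v-2$, $\beta=2$ dictated by the determinant value $(2n-2)(n-2)^{(n-2)/2}$ with $n=2v$, and this upper-left block equals $R_1R_1^T+R_2R_2^T$.

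Putting the two directions together gives the biconditional: $(A,B)$ is a $2$-$\{v;r,k;r+k-(v-1)/2\}$ supplementary difference set $\iff$ $R_1R_1^T+R_2R_2^T=(2v-2)I+2J$ $\iff$ $X(R_1,R_2)$ is a circulant $D$-optimal design of order $2v$ meeting~\eqref{eq:det}. Finally, the parameter relation $(v-2r)^2+(v-2k)^2=2n-2$ of~\eqref{eq:1} simply records that $r,k$ are the counts of $-1$'s in the first rows of $R_1,R_2$; I would either cite~\eqref{eq:1} directly or re-derive it by evaluating the $(1,1)$-entry and a row sum of $R_1R_1^T+R_2R_2^T=(2v-2)I+2J$.

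The only genuinely non-routine point is the converse direction, namely extracting the precise Gram identity~\eqref{eq:Gram} from the mere fact that the determinant meets Ehlich's bound; this requires knowing that equality in that bound pins down the Gram matrix up to the stated block form rather than just bounding it, which is the content of Ehlich's structural analysis. Everything else—the substitution $\lambda = r+k-(v-1)/2$, the commutativity of circulants, and the bookkeeping of $r$ and $k$—is immediate. Since the statement is attributed to Chadjipantelis and Kounias, it is also legitimate simply to cite their appendix for the equivalence after performing the parameter substitution, and that is the route I would take if a self-contained argument for the converse is judged too long.
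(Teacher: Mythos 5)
Your proposal is correct and follows essentially the same route as the paper, which derives this lemma directly from Lemma~\ref{lem:SDS} by the substitution $\lambda=r+k-(v-1)/2$ (so that $4(r+k-\lambda)=2v-2$ and $2(v-2(r+k-\lambda))=2$, matching \eqref{eq:Gram}) together with the Ehlich/Chadjipantelis--Kounias facts recalled in Section~\ref{sec:dd}. The only difference is that you sketch the converse (equality in \eqref{eq:det} forcing the Gram identity \eqref{eq:Gram}) rather than delegating it to the cited appendix, and your sketch of that step is sound.
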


\section{Skew-symmetric supplementary difference sets}
\label{sec:skewSDS}

Let $(A,B)$ be  a supplementary difference set.
Let $R_1$ and $R_2$ denote the corresponding 
circulant $v \times v$ $(1,-1)$-matrices 
described in Section~\ref{sec:SDS-D}.
Then we consider the following matrix:
\begin{equation}\label{eq:skewSDS}
X(R_1,R_2)=
\left(\begin{array}{rr}
R_1 & R_2 \\
-R_2^T & R_1^T
\end{array}\right).
\end{equation}
We call $(A,B)$ {\em skew-symmetric} if 
the corresponding matrix 
$X(R_1,R_2)$  in \eqref{eq:skewSDS} is skew-symmetric. 
Equivalently, $(A,B)$ is skew-symmetric
if $A$ satisfies the condition that $0 \not\in A$ and if $i \in A$ then $-i \not\in A$.
In~\cite{BS, S69,W71}, skew-symmetric
$2$-$\{v;(v-1)/2,(v-1)/2;(v-3)/2\}$ supplementary difference sets 
are called complementary difference sets and these 
difference sets were used to construct skew-Hadamard
matrices.

\begin{lemma}\label{lem:skew}
The matrix
$X(R_1,R_2)$  in \eqref{eq:skewSDS}
is skew-symmetric if and only if $r_{1,1}=1$ and $r_{1,i}=-r_{1,v+2-i}$ 
$(i=2,3,\ldots,v)$. 
If $X(R_1,R_2)$ is skew-symmetric, then $r=\frac{v-1}{2}$. 
\end{lemma}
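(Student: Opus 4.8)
The plan is to reduce skew-symmetry of the block matrix $X(R_1,R_2)$ to a single condition on $R_1$ alone, then read that condition off the first row of the circulant $R_1$; the assertion $r=(v-1)/2$ will drop out afterwards by a counting argument. Recall that in the sense used in \eqref{eq:C2}, a $(1,-1)$-matrix $M$ is skew-symmetric when $M-I=-(M-I)^T$, equivalently $M+M^T=2I$.

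First I would compute, directly from the block form \eqref{eq:skewSDS},
\[
X(R_1,R_2)+X(R_1,R_2)^T
=\begin{pmatrix}
R_1+R_1^T & R_2-R_2\\
-R_2^T+R_2^T & R_1^T+R_1
\end{pmatrix}
=\begin{pmatrix}
R_1+R_1^T & O\\
O & R_1+R_1^T
\end{pmatrix}.
\]
Hence $X(R_1,R_2)$ is skew-symmetric if and only if $R_1+R_1^T=2I$, with the blocks $R_2,R_2^T$ playing no role at all.

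Next I would translate $R_1+R_1^T=2I$ into a statement about the first row $r_1=(r_{1,1},\dots,r_{1,v})$. Since $R_1$ is circulant, $(R_1)_{i,j}=r_{1,\,((j-i)\bmod v)+1}$, so $(R_1^T)_{i,j}=(R_1)_{j,i}=r_{1,\,((i-j)\bmod v)+1}$. Comparing diagonal entries gives $2r_{1,1}=2$, i.e.\ $r_{1,1}=1$. For $i\neq j$, putting $k=(j-i)\bmod v\in\{1,\dots,v-1\}$ so that $(i-j)\bmod v=v-k$, the off-diagonal condition becomes $r_{1,k+1}+r_{1,(v-k)+1}=0$ for all such $k$; re-indexing by $i=k+1$, so that $i$ runs over $2,\dots,v$ and $(v-k)+1=v+2-i$, this is exactly $r_{1,i}=-r_{1,v+2-i}$ for $i=2,\dots,v$. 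Conversely, these relations clearly force $R_1+R_1^T=2I$, so this gives the stated equivalence.

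Finally, for the count of $-1$'s: the map $i\mapsto v+2-i$ is an involution of $\{2,\dots,v\}$ whose only possible fixed point satisfies $2i=v+2$, which cannot occur since $v$ is odd by \eqref{eq:CS1}. Hence $\{2,\dots,v\}$ splits into $(v-1)/2$ two-element orbits $\{i,v+2-i\}$, and on each orbit the relation $r_{1,i}=-r_{1,v+2-i}$ forces precisely one of the two entries to equal $-1$. Together with $r_{1,1}=1$, the first row of $R_1$ then has exactly $(v-1)/2$ entries equal to $-1$, i.e.\ $r=(v-1)/2$. I do not expect a genuine obstacle here: the argument is a routine computation, and the only points requiring care are fixing a consistent circulant indexing convention, handling the modular reduction when passing between $k$ and $v-k$, and observing that skew-symmetry in fact forces $v$ odd, so \eqref{eq:CS1} imposes nothing extra at this stage.
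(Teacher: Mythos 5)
Your proof is correct, and since the paper explicitly omits this proof as elementary, your argument is precisely the routine verification the authors had in mind: reduce skew-symmetry of the block matrix (in the sense $X+X^T=2I$) to $R_1+R_1^T=2I$, translate that entrywise via the circulant structure, and count the $-1$'s using the fixed-point-free involution $i\mapsto v+2-i$ on $\{2,\ldots,v\}$. Your side remarks are also accurate, in particular that $R_2$ plays no role and that the skew condition itself forces $v$ odd, so no appeal to \eqref{eq:CS1} is strictly needed for the count.
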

\begin{proof}
The elementary proof is omitted.
\end{proof}

By Lemma~\ref{lem:SDS}, we have the following:

\begin{proposition}\label{prop:SDS}
If there exists a  skew-symmetric
$2$-$\{v;r,k;\lambda\}$ supplementary difference 
set satisfying  \eqref{eq:CS1}--\eqref{eq:CS3}.
Then there exists a matrix $M$ satisfying \eqref{eq:C1}--\eqref{eq:C3} 
for $(\alpha,\beta)=(4(r+k-\lambda),2(v-2(r+k-\lambda)))$.
\end{proposition}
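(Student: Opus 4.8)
The plan is to verify that the matrix $M := X(R_1,R_2)$ from \eqref{eq:skewSDS} associated with the given skew-symmetric supplementary difference set satisfies the three conditions \eqref{eq:C1}--\eqref{eq:C3} in turn. First I would note that $X(R_1,R_2)$ is a $2v \times 2v$ $(1,-1)$-matrix, since $R_1$ and $R_2$ are by construction circulant $v \times v$ $(1,-1)$-matrices; by \eqref{eq:CS1} $v$ is odd, so $d := v$ is odd and $|M| = 2d \times 2d$, establishing \eqref{eq:C1}. For \eqref{eq:C2}, skew-symmetry of $M-I$ is exactly the hypothesis that $(A,B)$ is skew-symmetric, by the very definition of skew-symmetric supplementary difference set given before Lemma~\ref{lem:skew}; one should remark that skew-symmetry of the $(1,-1)$-matrix $M$ (off the diagonal) together with $M$ having all diagonal entries $+1$ is the same as $M - I = -(M-I)^T$.

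The main computation is \eqref{eq:C3}. Here I would invoke Lemma~\ref{lem:SDS}: since $(A,B)$ is a $2$-$\{v;r,k;\lambda\}$ supplementary difference set, we have
\[
R_1 R_1^T + R_2 R_2^T = 4(r+k-\lambda) I_v + 2\bigl(v - 2(r+k-\lambda)\bigr) J_v .
\]
Then a direct block multiplication gives
\[
M M^T = \begin{pmatrix} R_1 & R_2 \\ -R_2^T & R_1^T \end{pmatrix}\begin{pmatrix} R_1^T & -R_2 \\ R_2^T & R_1 \end{pmatrix} = \begin{pmatrix} R_1 R_1^T + R_2 R_2^T & -R_1 R_2 + R_2 R_1 \\ -R_2^T R_1^T + R_1^T R_2^T & R_2^T R_2 + R_1^T R_1 \end{pmatrix}.
\]
Since $R_1$ and $R_1^T$ commute (they are circulant, hence normal, and in fact $R_1^T$ is also circulant, and circulant matrices commute with one another), the off-diagonal blocks vanish: $R_1 R_2 = R_2 R_1$ and likewise for the transposed block. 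For the lower-right block, $R_1^T R_1 + R_2^T R_2 = (R_1 R_1^T + R_2 R_2^T)^T = 4(r+k-\lambda) I_v + 2(v-2(r+k-\lambda)) J_v$ as well, using that $I$ and $J$ are symmetric. Setting $\alpha = 4(r+k-\lambda)$ and $\beta = 2(v-2(r+k-\lambda))$ we obtain
\[
M M^T = \begin{pmatrix} \alpha I + \beta J & O \\ O & \alpha I + \beta J \end{pmatrix},
\]
and the integrality and size constraints $\alpha \ge 2$, $\beta \ge 1$ are precisely conditions \eqref{eq:CS2} and \eqref{eq:CS3}. This gives \eqref{eq:C3} and completes the proof.

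I expect the only genuine subtlety to be the vanishing of the off-diagonal blocks of $MM^T$, which hinges on the commutativity of products among the circulant matrices $R_1, R_2, R_1^T, R_2^T$; everything else is bookkeeping. One clean way to handle this is to recall that all $v \times v$ circulant matrices form a commutative algebra (they are simultaneously diagonalized by the discrete Fourier transform, or equivalently are polynomials in the cyclic shift matrix), and that the transpose of a circulant matrix is again circulant; hence $R_1 R_2 = R_2 R_1$ and $R_1^T R_2^T = R_2^T R_1^T$, so $-R_1 R_2 + R_2 R_1 = O$ and $-R_2^T R_1^T + R_1^T R_2^T = O$. With that observation in place, the block identity is immediate and the proposition follows.
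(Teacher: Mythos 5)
Your proposal is correct and follows the same route the paper intends: the paper derives Proposition~\ref{prop:SDS} directly from Lemma~\ref{lem:SDS}, taking $M=X(R_1,R_2)$ and reading off $\alpha=4(r+k-\lambda)\ge 2$ and $\beta=2(v-2(r+k-\lambda))\ge 1$ from \eqref{eq:CS2} and \eqref{eq:CS3}. You have simply made explicit the block computation of $MM^T$ and the vanishing of the off-diagonal blocks via commutativity of circulant matrices, which the paper leaves unstated.
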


Now we give a remark on the condition \eqref{eq:CS2}
for skew-symmetric
$2$-$\{v;r,k;\lambda\}$ supplementary difference sets.

\begin{proposition}\label{prop:CS2}
Suppose that $k \le \frac{v-1}{2}$.
If there exists a  skew-symmetric
$2$-$\{v;r,k;\lambda\}$ supplementary difference set $(A,B)$,
then $r+k-\lambda\ge 1$, that is,
$(A,B)$ satisfies \eqref{eq:CS2}.
\end{proposition}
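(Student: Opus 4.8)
The plan is to extract a numerical inequality from the defining identity \eqref{eq:SDS} together with the constraint $r=\frac{v-1}{2}$ forced by skew-symmetry (Lemma~\ref{lem:skew}), and then to show that $r+k-\lambda\le 0$ is incompatible with $0\le k\le\frac{v-1}{2}$. First I would record that skew-symmetry gives $r=\frac{v-1}{2}$, so $v=2r+1$ and $v-1=2r$. Substituting into \eqref{eq:SDS} yields $r(r-1)+k(k-1)=2r\lambda$, whence $\lambda=\frac{r(r-1)+k(k-1)}{2r}$. Therefore
\[
r+k-\lambda = r+k-\frac{r(r-1)+k(k-1)}{2r}
= \frac{2r^2+2rk-r^2+r-k^2+k}{2r}
= \frac{r^2+r+2rk-k^2+k}{2r}.
\]
The task is then to show the numerator $N:=r^2+r+2rk-k^2+k$ is at least $2r$, i.e. $N-2r = r^2-r+2rk-k^2+k=r(r-1)+k(2r-k)+k\ge ?$. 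Actually it is cleaner to complete the square: $N = (r^2+2rk-k^2) + (r+k) = -(k-r)^2 + 2r^2 + (r+k)$, so
\[
r+k-\lambda = \frac{2r^2+r+k-(k-r)^2}{2r}.
\]

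The key step is to bound $(k-r)^2$. Since $0\le k\le\frac{v-1}{2}=r$, we have $-r\le k-r\le 0$, hence $(k-r)^2\le r^2$, with equality only when $k=0$. Plugging this in,
\[
r+k-\lambda \;\ge\; \frac{2r^2+r+k-r^2}{2r} \;=\; \frac{r^2+r+k}{2r}\;=\;\frac{r+1}{2}+\frac{k}{2r}\;>\;\frac12,
\]
and since $r+k-\lambda$ is an integer (all of $r,k,\lambda$ are integers), $r+k-\lambda\ge 1$. This gives exactly $4(r+k-\lambda)\ge 4\ge 2$, which is \eqref{eq:CS2}. The case analysis at $k=0$ is harmless since then the bound still yields $r+k-\lambda\ge\frac{r+1}{2}>\frac12$; one should just note $v>2$ so $r\ge 1$ and the fraction is well-defined.

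I do not anticipate a genuine obstacle here; the only points requiring a little care are (i) verifying that $k\le\frac{v-1}{2}$ is the hypothesis actually used (it is — it is precisely what makes $(k-r)^2\le r^2$), (ii) confirming the integrality of $r+k-\lambda$ so that ``$>\tfrac12$'' upgrades to ``$\ge 1$'', and (iii) making sure the algebraic identity for $\lambda$ from \eqref{eq:SDS} is applied with the correct substitution $v-1=2r$ coming from Lemma~\ref{lem:skew}. An alternative, perhaps slicker, presentation avoids solving for $\lambda$: multiply the desired inequality $r+k-\lambda\ge 1$ by $2(v-1)=4r>0$ and use \eqref{eq:SDS} to replace $2(v-1)\lambda$ by $2(r(r-1)+k(k-1))$, reducing everything to the polynomial inequality $4r(r+k)-2r(r-1)-2k(k-1)\ge 4r$ in the two nonnegative integers $r,k$ with $k\le r$, which is then a one-line completion of the square. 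Either route works; I would present the second.
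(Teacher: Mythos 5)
Your proof is correct and follows essentially the same route as the paper: use Lemma~\ref{lem:skew} to get $r=\frac{v-1}{2}$, substitute into \eqref{eq:SDS} to obtain a closed expression for $r+k-\lambda$ (your formula in $r,k$ is exactly the paper's $\frac{(v+2k)(v-2k)-1+4kv}{4(v-1)}$ rewritten with $v=2r+1$), and then use $k\le\frac{v-1}{2}$ together with integrality to conclude $r+k-\lambda\ge 1$. The only difference is cosmetic: you verify positivity of the numerator by completing the square, where the paper simply observes it directly from the assumption.
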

\begin{proof}
By Lemma~\ref{lem:skew}, $r=\frac{v-1}{2}$.
Hence, it follows from \eqref{eq:SDS} that
$r+k-\lambda=\frac{(v+2k)(v-2k)-1+4kv}{4(v-1)}$.
From the assumption, $r+k-\lambda >0$.
The result follows.
\end{proof}

For the case $k\in\{0,1\}$, $2$-$\{v;r,k;\lambda\}$ supplementary difference sets are characterized as follows.
Although the following characterization is somewhat trivial,
it was not explicitly stated in the literature.
We give a proof for the sake of completeness.

\begin{proposition}
The following statements are equivalent.  
\begin{enumerate}
\item There exists 
a  skew-symmetric $2$-$\{4m-1;2m-1,k;m-1\}$ supplementary difference set 
with $k=0$ and $1$. 
\item There exists a circulant Hadamard $2$-$(4m-1,2m-1,m-1)$ design with incidence matrix $M$ satisfying that $M+M^T+I=J$.
\end{enumerate}
\end{proposition}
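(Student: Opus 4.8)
The plan is to reduce both statements to the existence of a skew-symmetric cyclic $(4m-1,2m-1,m-1)$-difference set and then match the two matrix models. First I would observe that when $|B|\le 1$ the set $B$ has no two distinct elements, so $P_B=(0,\ldots,0)$ (recall that $P_B$ records only the differences $1,2,\ldots,v-1$, not the $|B|$ pairs $(b,b)$ contributing the difference $0$). Hence a skew-symmetric $2$-$\{4m-1;2m-1,k;m-1\}$ supplementary difference set $(A,B)$ with $k\in\{0,1\}$ exists if and only if $P_A=(m-1,\ldots,m-1)$ and $A$ meets the skew-symmetry condition of Lemma~\ref{lem:skew}, namely $0\notin A$ and $i\in A\Rightarrow -i\notin A$; for $k=1$ the second coordinate $B$ may be taken to be any singleton, since no condition is imposed on $B$. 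Because $|A|=\tfrac{v-1}{2}=2m-1$ by Lemma~\ref{lem:skew}, the skew-symmetry condition says precisely that $A$ and $-A$ partition $\ZZ_{4m-1}\setminus\{0\}$, while $P_A=(m-1,\ldots,m-1)$ says precisely that $A$ is a $(4m-1,2m-1,m-1)$-difference set. So statement $(1)$ is equivalent, for each of $k=0$ and $k=1$, to the existence of a skew-symmetric cyclic $(4m-1,2m-1,m-1)$-difference set.

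Next I would set up the dictionary with the incidence matrix. Given such an $A$, let $R_1$ be the associated circulant $(1,-1)$-matrix of Section~\ref{sec:SDS-D} and put $M=\tfrac12(J-R_1)$, the circulant $(0,1)$-matrix whose first row has a $1$ in position $i+1$ exactly when $i\in A$; equivalently $M_{j,l}=1$ iff $l-j\in A \pmod{4m-1}$. Two routine computations drive everything. The first: $(MM^T)_{j,j'}$ equals $|A|$ if $j=j'$ and equals $P_A(j'-j)$ if $j\ne j'$, so $MM^T=mI+(m-1)J$ is equivalent to $|A|=2m-1$ together with $P_A=(m-1,\ldots,m-1)$, i.e.\ to $A$ being a $(4m-1,2m-1,m-1)$-difference set; and then the constant row sums $MJ=(2m-1)J$ exhibit $M$ as the incidence matrix of a symmetric $2$-$(4m-1,2m-1,m-1)$ design, which is a Hadamard design by the parameter shape. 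The second: $M$ has zero diagonal and $M_{j,l}+M_{l,j}=1$ for all $j\ne l$ if and only if $0\notin A$ and for every $c\ne 0$ exactly one of $c,-c$ lies in $A$; that is, $M+M^T+I=J$ if and only if $A$ satisfies the skew-symmetry condition.

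With the dictionary in hand both implications are immediate. For $(1)\Rightarrow(2)$: take a skew-symmetric cyclic $(4m-1,2m-1,m-1)$-difference set $A$; then $M$ as above is circulant, is the incidence matrix of a Hadamard $2$-$(4m-1,2m-1,m-1)$ design by the first computation, and satisfies $M+M^T+I=J$ by the second. For $(2)\Rightarrow(1)$: given a circulant Hadamard $2$-$(4m-1,2m-1,m-1)$ design with incidence matrix $M$ satisfying $M+M^T+I=J$, let $A=\{i\in\ZZ_{4m-1}\mid M_{1,i+1}=1\}$ be its connection set; the first computation shows $A$ is a $(4m-1,2m-1,m-1)$-difference set and the second shows $A$ is skew-symmetric, so $(A,\emptyset)$ and $(A,\{0\})$ are the required skew-symmetric supplementary difference sets for $k=0$ and $k=1$ respectively.

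There is no serious obstacle here; the proposition is essentially a translation exercise. The only points needing care are the remark that $P_B$ ignores the zero difference so that $|B|\le 1$ contributes nothing, the index bookkeeping behind the identities $(MM^T)_{j,j'}=P_A(j'-j)$ and ``$M+M^T+I=J \iff A$ skew-symmetric'', and the use of Lemma~\ref{lem:skew} to know a priori that $|A|=2m-1$, which is what upgrades $A\cap(-A)=\emptyset$ to a partition of $\ZZ_{4m-1}\setminus\{0\}$ into $A$ and $-A$.
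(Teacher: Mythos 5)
Your proof is correct and follows essentially the same route as the paper: reduce statement (1) to the existence of a $(4m-1,2m-1,m-1)$-difference set $A$ with $0\notin A$ and $A\cap(-A)=\emptyset$ (since $B$ with $k\le 1$ contributes no differences), then translate via the circulant incidence matrix, with $M+M^T+I=J$ encoding the skew condition. Your write-up is just more explicit than the paper's (notably in using $|A|=2m-1$ to upgrade ``at most one of $\pm c$ in $A$'' to ``exactly one,'' a point the paper passes over silently), so no further comment is needed.
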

\begin{proof}
Suppose that there exists
a skew-symmetric $2$-$\{4m-1;2m-1,k;m-1\}$ supplementary difference set $(A,B)$ 
with $k \in \{0,1\}$.
Then $A$ is a $(4m-1,2m-1,m-1)$-difference set. 
Let $M$ be an incidence matrix of $A$. 
Then $M$ is an incidence matrix of
a circulant Hadamard $2$-$(4m-1,2m-1,m-1)$ design.
Since $A$ satisfies the condition that if $i \in A$ then $-i \not\in A$,
$M$ satisfies the condition that $M+M^T+I=J$. 

Suppose that there exists a circulant Hadamard $2$-$(4m-1,2m-1,m-1)$ 
design with incidence matrix $M$ satisfying that $M+M^T+I=J$.
By reversing the above argument, 
a $(4m-1,2m-1,m-1)$-difference set $A$ satisfying
the condition that if $i \in A$ then $-i \not\in A$
is constructed.
Then $(A,\emptyset)$ and $(A,\{0\})$ are
skew-symmetric  $2$-$\{v;r,k;\lambda\}$
supplementary difference sets 
with parameters 
$(v,r,k,\lambda)=(4m-1,2m-1,0,m-1)$ and 
$(4m-1,2m-1,1,m-1)$, respectively.
\end{proof}

Suppose that $p$ is a prime with $p\equiv 3\pmod{4}$.
Then it is well known that there exists a circulant Hadamard 
$2$-$(p,\frac{p-1}{2},\frac{p-3}{4})$ design with incidence matrix $A$ satisfying that 
$A+A^T+I=J$ (see e.g., \cite[Lemma~7.10]{HSS-OA}).
This implies the existence of skew-symmetric supplementary difference sets with
parameters
$2$-$\{p;\frac{p-1}{2},0;\frac{p-3}{4}\}$ and 
$2$-$\{p;\frac{p-1}{2},1;\frac{p-3}{4}\}$.



\section{Classification method}\label{sec:method}

In this section, we describe how to classify
skew-symmetric supplementary difference sets
satisfying \eqref{eq:CS1}--\eqref{eq:CS3}.

\subsection{Equivalent supplementary difference sets}
\label{subsec:Eq}

If $(A,B)$ is a supplementary difference set, then
the following pairs
\begin{itemize}
\item[(E0)] $(\ZZ_{v}\backslash A,B)$ and $(A,\ZZ_{v}\backslash B)$,
\item[(E1)] $(B,A)$,
\item[(E2)] $(\pm A+a, \pm B+b)$ for any $a, b \in \ZZ_v$, 
\item[(E3)] $(dA, dB)$ for any $d \in U(\ZZ_v)$
\end{itemize}
are also supplementary difference sets,
where $U(\ZZ_v)=\{d \in \{1,2,\ldots,v-1\} \mid \gcd(d,v)=1\}$
and $d$ is regarded as an integer for $\gcd(d,v)=1$.
These supplementary difference sets are called
{\em equivalent}~\cite{KKNK}.

\subsection{Classification method}

Let $(A,B)$ be a skew-symmetric $2$-$\{v;r,k;\lambda\}$ 
supplementary difference set satisfying \eqref{eq:CS1}, \eqref{eq:CS3}.
By Lemma~\ref{lem:skew}, $r=\frac{v-1}{2}$.
By (E0), 
we may assume without loss of generality that 
$k \le \frac{v-1}{2}$.
We note that $(A,B)$ satisfies \eqref{eq:CS2} by Proposition~\ref{prop:CS2} 
under this assumption.
In addition, if $r=k$, then
it follows from \eqref{eq:SDS} that
$2(v-2(r+k-\lambda)) =-2$.
Hence, 
we may assume without loss of generality that 
\begin{equation}\label{eq:New}
k < \frac{v-1}{2}=r.  
\end{equation}
Since $A$ corresponds to a skew-symmetric matrix, 
there exists $A' \subset \ZZ_v'$ such that $A=A'\cup \{v-j \mid j 
\in \ZZ_v' \setminus A'\}$,
where $\ZZ_v' = \{1,2,\dots,(v-1)/2\}\subset \ZZ_v$.
By (E2), $(A,B+b)$ is a skew-symmetric supplementary difference set for any $b \in \ZZ_v$.
We classify skew-symmetric  $2$-$\{v;(v-1)/2,k;\lambda\}$ 
supplementary difference sets 
satisfying \eqref{eq:CS1}, \eqref{eq:CS3} by the following steps.

\begin{enumerate}[\rm (i)]
\item We calculate $\overline{\cal A}=
\{A'\cup \{v-j \mid j \in \ZZ_v' \setminus A'\} \mid A' \subset \ZZ_v'\}$.
Then we find ${\cal A} = \{A \in \overline{\cal A} \mid 
P_A(i) \le \lambda \text{ for all }i\}$.

\item We calculate 
$\overline{\cal B}=\{B \subset \ZZ_v \mid |B|=k, B \preceq B+b \text{ for any } b
\in \ZZ_v\}$,
where $\preceq$ is  a natural lexicographic order 
on $k$-subsets of $\ZZ_v$.
Then we find ${\cal B} = \{B \in \overline{\cal B} \mid 
P_B(i) \le \lambda \text{ for all }i\}$.

\item We construct ${\cal AB}=\{(A,B) \in {\cal A}\times{\cal B} \mid
      P_A+P_B=(\lambda,\lambda,\dots,\lambda)\}$.

\item 
We classify ${\cal AB}$.
\end{enumerate}

In Step (i) (resp.\ (ii)), we found all $((v-1)/2)$-subsets 
(resp.\ $k$-subsets) of $\ZZ_v$ by a computer program implemented 
in {\tt C} language
using functions from the GNU Scientific Library (GSL) software library,
then we output $A$ and $(\lambda,\lambda,\dots,\lambda)-P_A$
(resp.\ $B$ and $P_B$) to a file.
We sorted the above data by $(\lambda,\lambda,\dots,\lambda)-P_A$ 
(resp.\ $P_B$).
We found a pair $(A,B)$ with $(\lambda,\lambda,\dots,\lambda)-P_A=P_B$ 
in Step (iii).
Two skew-symmetric  $2$-$\{v;(v-1)/2,k;\lambda\}$ 
supplementary difference sets $(A,B)$ and $(A',B')$ are 
equivalent if and only if $(A',B')$ is an element of 
$\{(\pm dA+a,\pm dB+b) \mid d \in U(\ZZ_v), a,b \in \ZZ_v \}$.
In Step (iv), for $(A,B)$ and $(A',B')$, we determined whether there exist
$d \in U(\ZZ_v)$ and $a,b \in \ZZ_v$ such that
$(A',B')=(dA+a,dB+b),(dA+a,-dB+b),(-dA+a,dB+b)$ or 
$(-dA+a,-dB+b)$.
This was done 
by using the program implemented in {\tt C} language.

\section{Classification of skew-symmetric supplementary difference sets}
\label{sec:SDS}

In this section,
we give a classification of skew-symmetric
$2$-$\{v;r,k;\lambda\}$ supplementary differences
sets satisfying \eqref{eq:CS1}--\eqref{eq:CS3} for $v \le 51$.
This is the main result of this paper.
As described in Proposition~\ref{prop:SDS}, 
a  skew-symmetric
$2$-$\{v;r,k;\lambda\}$ supplementary difference set
satisfying \eqref{eq:CS1}--\eqref{eq:CS3},
gives a matrix $M$ satisfying \eqref{eq:C1}--\eqref{eq:C3} 
for $(\alpha,\beta)=(4(r+k-\lambda),2(v-2(r+k-\lambda)))$.

We call $(v,r,k,\lambda)$ {\em feasible parameters} for 
supplementary difference sets if
$(v,r,k,\lambda)$ satisfies \eqref{eq:CS1}--\eqref{eq:CS3},
\eqref{eq:SDS} and \eqref{eq:New}
(see Proposition~\ref{prop:CS2} for \eqref{eq:CS2}).
In Table~\ref{Tab:Par},
we list the feasible parameters $(v,r,k,\lambda)$ for $v\leq 75$.

\begin{table}[thbp]
\caption{Parameters of skew-symmetric supplementary difference sets}
\label{Tab:Par}
\begin{center}
{\small
\begin{tabular}{l|c|l|c}
\noalign{\hrule height0.8pt}
\multicolumn{1}{c|}{$(v,r,k,\lambda)$}
& $N(v,r,k,\lambda)$ &
\multicolumn{1}{c|}{$(v,r,k,\lambda)$}
& $N(v,r,k,\lambda)$ \\
\hline
$( 3,  1,  0,  0)$ & 1 &$(43, 21, 15, 15)$ & 0\\
$( 7,  3,  0,  1)$ & 1 &$(45, 22, 11, 13)$ & 0\\
$( 7,  3,  1,  1)$ & 1 &$(47, 23,  0, 11)$ & 1\\
$(11,  5,  0,  2)$ & 1 &$(47, 23,  1, 11)$ & 1\\
$(11,  5,  1,  2)$ & 1 &$(49, 24,  9, 13)$ & 0\\
$(13,  6,  3,  3)$ & 1 &$(51, 25,  0, 12)$ & 0\\
$(15,  7,  0,  3)$ & 0 &$(51, 25,  1, 12)$ & 0\\
$(15,  7,  1,  3)$ & 0 &$(53, 26, 14, 16)$ & ?\\
$(19,  9,  0,  4)$ & 1 &$(55, 27,  0, 13)$ & 0\\
$(19,  9,  1,  4)$ & 1 &$(55, 27,  1, 13)$ & 0\\
$(21, 10,  6,  6)$ & 1 &$(57, 28, 21, 21)$ & ?\\
$(23, 11,  0,  5)$ & 1 &$(59, 29,  0, 14)$ & 1\\
$(23, 11,  1,  5)$ & 1 &$(59, 29,  1, 14)$ & 1\\
$(25, 12,  4,  6)$ & 0 &$(61, 30,  6, 15)$ & 0\\
$(27, 13,  0,  6)$ & 0 &$(61, 30, 10, 16)$ & 0\\
$(27, 13,  1,  6)$ & 0 &$(61, 30, 15, 18)$ & ?\\
$(29, 14,  7,  8)$ & 1 &$(63, 31,  0, 15)$ & 0\\
$(31, 15,  0,  7)$ & 1 &$(63, 31,  1, 15)$ & 0\\
$(31, 15,  1,  7)$ & 1 &$(67, 33,  0, 16)$ & $1$\\
$(31, 15,  6,  8)$ & 1 &$(67, 33,  1, 16)$ & $1$\\
$(31, 15, 10, 10)$ & 1 &$(67, 33, 12, 18)$ & ?\\
$(35, 17,  0,  8)$ & 0 &$(67, 33, 22, 23)$ & ?\\
$(35, 17 , 1,  8)$ & 0 &$(69, 34, 18, 21)$ & ?\\
$(37, 18, 10, 11)$ & 0 &$(71, 35,  0, 17)$ & $1$\\
$(39, 19,  0,  9)$ & 0 &$(71, 35,  1, 17)$ & $1$\\
$(39, 19,  1,  9)$ & 0 &$(71, 35, 15, 20)$ & ?\\
$(41, 20,  5, 10)$ & 0 &$(71, 35, 21, 23)$ & ?\\
$(43, 21,  0, 10)$ & 1 &$(73, 36, 28, 28)$ & ?\\
$(43, 21,  1, 10)$ & 1 &$(75, 37,  0, 18)$ & 0\\
$(43, 21,  7, 11)$ & 0 &$(75, 37,  1, 18)$ & 0\\
\noalign{\hrule height0.8pt}
\end{tabular}
}
\end{center}
\end{table}

By an approach given in Section~\ref{sec:method},
our exhaustive computer search completed
a classification of skew-symmetric  $2$-$\{v;r,k;\lambda\}$
supplementary difference sets 
satisfying \eqref{eq:CS1}--\eqref{eq:CS3} for 
the feasible parameters in Table~\ref{Tab:Par}
with $v \le 51$.
We used a computer with CPU Intel(R) Core(TM) i7 4790k, 4 Core.

\begin{theorem}\label{thm:SDS-Classification}
Suppose that $v \le 51$.
If there exists a skew-symmetric  $2$-$\{v;r,k;\lambda\}$
supplementary difference sets satisfying \eqref{eq:CS1}--\eqref{eq:CS3},
then it is equivalent to one of the supplementary difference sets
$(A,B)$ with $v \le 51$ in Table~\ref{Tab:SDS}.
\end{theorem}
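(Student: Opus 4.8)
\textbf{Proof strategy for Theorem~\ref{thm:SDS-Classification}.}
The statement is a finite classification result, so the plan is essentially to justify that the exhaustive computer search described in Section~\ref{sec:method} is both complete and correct. First I would recall that, by Lemma~\ref{lem:skew}, any skew-symmetric $2$-$\{v;r,k;\lambda\}$ supplementary difference set forces $r=\frac{v-1}{2}$, and that by the reduction (E0) together with Proposition~\ref{prop:CS2} and \eqref{eq:SDS} we may assume $k<\frac{v-1}{2}=r$, i.e.\ the parameters lie among the feasible tuples. Thus it suffices to show that for each feasible $(v,r,k,\lambda)$ with $v\le 51$ listed in Table~\ref{Tab:Par}, every skew-symmetric supplementary difference set with those parameters is equivalent to one appearing in Table~\ref{Tab:SDS}, and conversely that each entry of Table~\ref{Tab:SDS} genuinely is such a supplementary difference set.

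Next I would verify that the search space enumerated in Steps~(i)--(iii) is exhaustive. For Step~(i), the skew-symmetry condition on $A$ (namely $0\notin A$ and $i\in A\Rightarrow -i\notin A$, from Lemma~\ref{lem:skew}) means $A$ is determined by an arbitrary subset $A'\subseteq\ZZ_v'=\{1,\dots,(v-1)/2\}$ via $A=A'\cup\{v-j\mid j\in\ZZ_v'\setminus A'\}$; hence $\overline{\cal A}$ lists \emph{all} candidate first components, and pruning to ${\cal A}$ by the necessary condition $P_A(i)\le\lambda$ loses nothing. For Step~(ii), applying (E2) to shift $B$ lets us assume $B$ is lexicographically minimal in its shift-orbit, so $\overline{\cal B}$ is a complete set of orbit representatives for $k$-subsets up to translation, and again the $P_B(i)\le\lambda$ pruning is sound because $P_A+P_B=(\lambda,\dots,\lambda)$ with $P_A(i)\ge 0$ forces $P_B(i)\le\lambda$. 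Step~(iii) then simply pairs them off by the defining equation $P_A+P_B=(\lambda,\dots,\lambda)$, so ${\cal AB}$ contains (a representative of the shift-orbit of) every skew-symmetric supplementary difference set with the given parameters. Finally, Step~(iv) sorts ${\cal AB}$ into equivalence classes under the full group generated by (E0)--(E3); since $A$ is already pinned to the skew-symmetric form and $r=\frac{v-1}{2}\ne k$, the relevant equivalences reduce to the maps $(A,B)\mapsto(\pm dA+a,\pm dB+b)$ with $d\in U(\ZZ_v)$, $a,b\in\ZZ_v$, exactly as implemented.

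I would then present Table~\ref{Tab:SDS} and, for each feasible parameter tuple with $v\le 51$, record the number $N(v,r,k,\lambda)$ of equivalence classes found (as in Table~\ref{Tab:Par}), noting that $N=0$ entries correspond to a proven nonexistence and the remaining entries are witnessed by the explicit sets in Table~\ref{Tab:SDS}. For the converse direction it is enough to check, directly from the definition, that each listed $(A,B)$ satisfies $P_A+P_B=(\lambda,\dots,\lambda)$ and the skew-symmetry condition on $A$; this is a finite verification that can be stated as having been carried out independently of the search. The main obstacle here is not mathematical subtlety but computational feasibility and trust: one must argue that the enumeration of all $((v-1)/2)$-subsets and all shift-reduced $k$-subsets of $\ZZ_v$ for $v$ up to $51$ is within reach (the hardest cases being those with the largest $k$ and the fewest symmetry reductions, e.g.\ $v=49,51$), and that the equivalence-testing in Step~(iv) is implemented correctly; accordingly I would include enough detail about the search ranges and the independent re-verification of the output to make the classification convincing.
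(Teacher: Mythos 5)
Your proposal follows essentially the same route as the paper: reduce to the feasible parameters $(v,r,k,\lambda)$ via Lemma~\ref{lem:skew}, (E0), Proposition~\ref{prop:CS2} and \eqref{eq:SDS}, then justify that the exhaustive enumeration in Steps (i)--(iii) of Section~\ref{sec:method} covers all candidates and that Step (iv) sorts them by the equivalences (E0)--(E3), with the outcome recorded in Tables~\ref{Tab:Par} and~\ref{Tab:SDS}. This matches the paper's argument, which likewise rests on the completeness of that computer search rather than on any further theoretical ingredient.
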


For $v \ge 53$, 
due to the computational complexity,
our exhaustive computer search completed
a classification of skew-symmetric  $2$-$\{v;r,k;\lambda\}$
supplementary difference sets 
satisfying \eqref{eq:CS1}--\eqref{eq:CS3} for 
the following feasible parameters:
\begin{equation}\label{eq:par}
\begin{array}{cl}
(v,r,k,\lambda)=
&(55, 27,  0, 13),(55, 27,  1, 13),(59, 29,  0, 14),\\
&(59, 29,  1, 14),(61, 30,  6, 15),(61, 30, 10, 16),\\
&(63, 31,  0, 15),(63, 31,  1, 15),(67, 33,  0, 16),\\
&(67, 33,  1, 16),(71, 35,  0, 17),(71, 35,  1, 17),\\
&(75, 37,  0, 18),(75, 37,  1, 18).

\end{array}
\end{equation}
The skew-symmetric  $2$-$\{v;r,k;\lambda\}$
supplementary difference sets $(A,B)$ with parameters \eqref{eq:par}
are listed in  Table~\ref{Tab:SDS}.
For the feasible parameters $(v,r,k,\lambda)$ given in Table~\ref{Tab:Par},
the numbers $N(v,r,k,\lambda)$ of the inequivalent
skew-symmetric  $2$-$\{v;r,k;\lambda\}$
supplementary difference sets are also listed in the table.

\section{Classification of skew-symmetric circulant  $D$-optimal
designs meeting \eqref{eq:det}}\label{sec:ccsdd}

Skew-symmetric circulant $D$-optimal designs
$X(R_1,R_2)$ in \eqref{eq:D-opt}
are corresponding to  a certain class of
skew-symmetric supplementary difference sets 
satisfying \eqref{eq:CS1}--\eqref{eq:CS3}.
According to~\cite{KKNK}, we say that
circulant $D$-optimal designs meeting \eqref{eq:det} are {\em equivalent}
if the supplementary difference sets 
constructed by Lemma~\ref{lem:SDS-Doptimal} are equivalent.
In this section, as a consequence of the previous section, 
we give a classification of skew-symmetric circulant $D$-optimal
designs meeting \eqref{eq:det} for orders up to $110$.

Let $D$ be a circulant $D$-optimal design $X(R_1,R_2)$ in \eqref{eq:D-opt}
of order $n=2v$ meeting \eqref{eq:det}.
Here we suppose that 
$r$ and $k$ are the numbers of $-1$'s in the first rows of
$R_1$ and $R_2$, respectively.
If $D$ is skew-symmetric, then $r=\frac{v-1}{2}$ by Lemma~\ref{lem:skew}.

We call $(n,r,k)$ {\em feasible parameters} for 
skew-symmetric circulant $D$-optimal designs
if $(n,r,k)$ satisfies 
$r=\frac{v-1}{2}$ and \eqref{eq:1}. 
In Table~\ref{Tab:Dpara}, we list the feasible parameters
$(n,r,k)$ for $n \le 200$.

\begin{table}[thb]
\caption{Parameters of skew-symmetric circulant $D$-optimal designs}
\label{Tab:Dpara}
\begin{center}
{\small
\begin{tabular}{l|c} 
\noalign{\hrule height0.8pt}
\multicolumn{1}{c|}{$(n,r,k)$} & $N(n,r,k)$ \\
\hline
$(   6,   1,  0)$& 1 \\
$(  14,   3,  1)$& 1 \\
$(  26,   6,  3)$& 1 \\
$(  42,  10,  6)$& 1 \\
$(  62,  15, 10)$& 1 \\
$(  86,  21, 15)$& 0 \\
$( 114,  28, 21)$& ? \\
$( 146,  36, 28)$& ? \\
$( 182,  45, 36)$& ? \\
\noalign{\hrule height0.8pt}
\end{tabular}
}
\end{center}
\end{table}

Let $S_3$, $S_7$, $S_{13}$, $S_{21}$ and $S_{31}$ be
the skew-symmetric  $2$-$\{v;r,k;\lambda\}$
supplementary difference sets in Table~\ref{Tab:SDS}
with 
$(v,r,k,\lambda)=(3,1,0,0)$,
$(7,3,1,1)$,
$(13,6,3,3)$,
$(21,10,6,6)$ and 
$(31,15,10,10)$, respectively.
Let $D_{6}$, $D_{14}$, $D_{26}$, $D_{42}$ and $D_{62}$ be
the skew-symmetric circulant $D$-optimal designs $X(R_1,R_2)$ in \eqref{eq:D-opt}
of orders $6, 14, 26, 42$ and $62$
meeting \eqref{eq:det},
constructed by Lemma~\ref{lem:SDS-Doptimal} from
$S_3$, $S_7$, $S_{13}$, $S_{21}$ and $S_{31}$, respectively.
From the classification in Theorem~\ref{thm:SDS-Classification},
we have the following:

\begin{corollary}\label{cor:D}
Suppose that $n \le 110$.
If there exists a skew-symmetric 
circulant $D$-optimal design $X(R_1,R_2)$ in \eqref{eq:D-opt}
of order $n$ meeting \eqref{eq:det},
then it is equivalent to one of 
$D_6$, $D_{14}$, $D_{26}$, $D_{42}$ and $D_{62}$.
\end{corollary}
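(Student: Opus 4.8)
The plan is to reduce the statement to Theorem~\ref{thm:SDS-Classification} through the dictionary provided by Lemma~\ref{lem:SDS-Doptimal}. First I would recall that a circulant $D$-optimal design $X(R_1,R_2)$ in \eqref{eq:D-opt} of order $n=2v$ meeting \eqref{eq:det} forces $n\equiv 2\pmod 4$, hence $v$ is odd, and that by Lemma~\ref{lem:SDS-Doptimal} it corresponds to a $2$-$\{v;r,k;r+k-(v-1)/2\}$ supplementary difference set, where $r$ and $k$ are the numbers of $-1$'s in the first rows of $R_1$ and $R_2$. If $X(R_1,R_2)$ is moreover skew-symmetric, then Lemma~\ref{lem:skew} gives $r=\frac{v-1}{2}$, so the associated supplementary difference set has $\lambda=r+k-\frac{v-1}{2}=k$. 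A direct check then shows this supplementary difference set satisfies \eqref{eq:CS1}--\eqref{eq:CS3}: \eqref{eq:CS1} is the parity of $v$ just noted; since $r+k-\lambda=r=\frac{v-1}{2}\ge 1$ we obtain \eqref{eq:CS2}; and $v-2(r+k-\lambda)=v-(v-1)=1$ gives \eqref{eq:CS3}. Conversely, any skew-symmetric supplementary difference set with these parameters yields, via \eqref{eq:D-opt}, a skew-symmetric circulant $D$-optimal design meeting \eqref{eq:det}.

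Next I would pin down which parameters can occur when $n\le 110$. Feasibility requires \eqref{eq:1}, that is $(v-2r)^2+(v-2k)^2=2n-2=4v-2$; with $v-2r=1$ this becomes $(v-2k)^2=4v-3$, so $4v-3$ must be a perfect square. Writing $4v-3=m^2$ with $m$ odd gives $v=(m^2+3)/4$, hence for $n=2v\le 110$ (equivalently $v\le 55$) the only possibilities are $v\in\{3,7,13,21,31,43\}$, i.e.\ $n\in\{6,14,26,42,62,86\}$, and in each case $k=(v-m)/2$ is forced (after using (E0) to normalise to $k<\frac{v-1}{2}$, which is legitimate since $k=\frac{v-1}{2}$ would force $4v-3=0$). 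These are exactly the lines of Table~\ref{Tab:Par} with $\lambda=k$, namely $(3,1,0,0)$, $(7,3,1,1)$, $(13,6,3,3)$, $(21,10,6,6)$, $(31,15,10,10)$ and $(43,21,15,15)$.

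All of these have $v\le 51$, so Theorem~\ref{thm:SDS-Classification} applies and lists every skew-symmetric $2$-$\{v;r,k;\lambda\}$ supplementary difference set with these parameters: there are none for $(43,21,15,15)$, and exactly one up to equivalence for each of the other five, namely $S_3,S_7,S_{13},S_{21},S_{31}$. Finally I would translate back: by the definition of equivalence for circulant $D$-optimal designs meeting \eqref{eq:det} (two such designs are equivalent precisely when the supplementary difference sets produced by Lemma~\ref{lem:SDS-Doptimal} are equivalent), the skew-symmetric circulant $D$-optimal designs of order $n\le 110$ meeting \eqref{eq:det} are, up to equivalence, exactly the images $D_6,D_{14},D_{26},D_{42},D_{62}$ of $S_3,S_7,S_{13},S_{21},S_{31}$ under \eqref{eq:D-opt}. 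The argument is essentially bookkeeping: the one point requiring a little care is the normalisation $k<\frac{v-1}{2}$ via (E0), together with the observation that every relevant order $n$ has $v\le 51$ — the extremal case being $n=86$, $v=43$ — so that Theorem~\ref{thm:SDS-Classification} indeed covers all of them.
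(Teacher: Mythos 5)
Your proposal is correct and takes essentially the same route as the paper: translate via Lemma~\ref{lem:SDS-Doptimal} and Lemma~\ref{lem:skew} to skew-symmetric supplementary difference sets with $r=\frac{v-1}{2}$ and $\lambda=k$, use \eqref{eq:1} to restrict the orders $n\le 110$ to $n\in\{6,14,26,42,62,86\}$ (the feasible parameters of Table~\ref{Tab:Dpara}), and then invoke Theorem~\ref{thm:SDS-Classification}, with $N(43,21,15,15)=0$ ruling out $n=86$. The only blemish is the parenthetical claim that $k=\frac{v-1}{2}$ would force $4v-3=0$; it actually forces $(v-2k)^2=1=4v-3$, i.e.\ $v=1$, which is equally impossible, so the conclusion stands.
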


The numbers $N(n,r,k)$ of the inequivalent skew-symmetric circulant
$D$-optimal designs meeting \eqref{eq:det}
are also listed in Table~\ref{Tab:Dpara} for the 
feasible parameters $(n,r,k)$.

A classification of circulant $D$-optimal designs 
$X(R_1,R_2)$ in \eqref{eq:D-opt} meeting \eqref{eq:det}
was given in~\cite{KKNK} for orders $n \le 58$ and $n=66$, and
in~\cite{AH} for orders $n=62, 74$
(see~\cite{AH} for the revised classification for order $26$).
Our computer search found that $D_n$ $(n=6,14,26)$ is equivalent to
the circulant $D$-optimal design,
which is constructed by Lemma~\ref{lem:SDS-Doptimal} from 
the first supplementary difference set given 
in~\cite[Table 1]{KKNK},
$D_{42}$ is equivalent to
the circulant $D$-optimal design,
which is constructed by Lemma~\ref{lem:SDS-Doptimal} from 
the 19th supplementary difference set given 
in~\cite[Table 1]{KKNK}, and
$D_{62}$ is equivalent to
the circulant $D$-optimal design, 
which is constructed by Lemma~\ref{lem:SDS-Doptimal} from 
the 50th supplementary difference set given 
in~\cite[Appendix]{AH}.

\bigskip
\noindent {\bf Acknowledgment.}
The authors would like to thank the anonymous referees for 
helpful comments.
This work is supported by JSPS KAKENHI Grant Numbers 15K04976, 26610032.

\begin{landscape}
\begin{table}[thbp]
\caption{Skew-symmetric supplementary difference sets $(A,B)$}
\label{Tab:SDS}
\begin{center}
{\footnotesize
\begin{tabular}{l} 
\noalign{\hrule height0.8pt}
$(v,r,k,\lambda)=(3,1,0,0)$ \\
$A=\{2\}$
$B=\emptyset$\\
\hline
$(v,r,k,\lambda)=(7,3,0,1),(7,3,1,1)$\\
$A=\{3, 5, 6\}$
$B=\emptyset,\{0\}$\\
\hline
$(v,r,k,\lambda)=(11,5,0,2),(11,5,1,2)$\\
$A=\{2, 6, 7, 8, 10\}$
$B=\emptyset,\{0\}$\\
\hline
$(v,r,k,\lambda)=(13,6,3,3)$\\
$A=\{4, 7, 8, 10, 11, 12\}$
$B=\{0, 2, 8\}$\\
\hline
$(v,r,k,\lambda)=(19,9,0,4),(19,9,1,4)$\\
$A=\{2, 3, 8, 10, 12, 13, 14, 15, 18\}$
$B=\emptyset,\{0\}$\\
\hline
$(v,r,k,\lambda)=(21,10,6,6)$\\
$A=\{2, 3, 9, 11, 13, 14, 15, 16, 17, 20\}$
$B=\{0, 1, 7, 9, 12, 17\}$\\
\hline
$(v,r,k,\lambda)=(23,11,0,5),(23,11,1,5)$\\
$A=\{5, 7, 10, 11, 14, 15, 17, 19, 20, 21, 22\}$
$B=\emptyset,\{0\}$\\
\hline
$(v,r,k,\lambda)=(29,14,7,8)$\\
$A=\{4, 5, 6, 8, 9, 10, 12, 13, 15, 18, 22, 26, 27, 28\}$
$B=\{0, 1, 11, 13, 15, 18, 21\}$\\
\hline
$(v,r,k,\lambda)=(31,15,0,7),(31,15,1,7)$\\
$A=\{3, 6, 11, 12, 13, 15, 17, 21, 22, 23, 24, 26, 27, 29, 30\}$
$B=\emptyset,\{0\}$\\
\hline
$(v,r,k,\lambda)=(31,15,6,8)$\\
$A=\{3, 6, 11, 12, 13, 15, 17, 21, 22, 23, 24, 26, 27, 29, 30\}$
$B=\{0, 1, 15, 20, 22, 28\}$\\
\hline
$(v,r,k,\lambda)=(31,15,10,10)$\\
$A=\{4, 6, 7, 12, 16, 17, 18, 20, 21, 22, 23, 26, 28, 29, 30\}$
$B=\{0, 1, 4, 5, 8, 11, 16, 18, 20, 29\}$\\
\hline
$(v,r,k,\lambda)=(43,21,0,10),(43,21,1,10)$\\
$A=\{2, 3, 5, 7, 8, 12, 18, 19, 20, 22, 26, 27, 28, 29, 30, 32, 33, 34, 37, 39, 42\}$
$B=\emptyset,\{0\}$\\
\hline
$(v,r,k,\lambda)=(47,23,0,11),(47,23,1,11)$\\
$A=\{5, 10, 11, 13, 15, 19, 20, 22, 23, 26, 29, 30, 31, 33, 35, 38, 39, 40, 41, 43, 44, 45, 46\}$
$B=\emptyset,\{0\}$\\
\hline
$(v,r,k,\lambda)=(59,29,0,14),(59,29,1,14)$\\
$A=\{2, 6, 8, 10, 11, 13, 14, 18, 23, 24, 30, 31, 32, 33, 34, 37, 38, 39, 40, 42, 43, 44, 47, 50, 52, 54, 55, 56, 58\}$
$B=\emptyset,\{0\}$\\
\hline
$(v,r,k,\lambda)=(67,33,0,16),(67,33,1,16)$\\
$A=\{2, 3, 5, 7, 8, 11, 12, 13, 18, 20, 27, 28, 30, 31, 32, 34, 38, 41, 42, 43, 44, 45, 46, 48, 50, 51, 52, 53, 57, 58, 61, 63, 66\}$
$B=\emptyset,\{0\}$\\
\hline
$(v,r,k,\lambda)=(71,35,0,17),(71,35,1,17)$\\
$A=\{7, 11, 13, 14, 17, 21, 22, 23, 26, 28, 31, 33, 34, 35, 39, 41, 42, 44, 46, 47, 51, 52, 53, 55, 56, 59, 61, 62, 63, 65, 66, 67, 68, 69, 70\}$
$B=\emptyset, \{0\}$\\
\hline
\noalign{\hrule height0.8pt}
\end{tabular}
}
\end{center}
\end{table}
\end{landscape}

\end{document}